\newtheorem{theorem}{Theorem} 
\newtheorem{theorem*}{Theorem} 
\newtheorem{lemma}[theorem]{Lemma}
\theoremstyle{definition}
\newtheorem{question}{Question}
\theoremstyle{remark}
\newcommand{\RR}{\mathbb{R}}
\newcommand{\CL}[1]{\left\lceil #1 \right\rceil}
\newcommand{\FL}[1]{\left\lfloor #1 \right\rfloor}
\def\st{\colon\,}
\title{The List Distinguishing Number Equals the Distinguishing Number for Interval Graphs}
\author{Poppy Immel\footnotemark[1] \ and Paul S.\ Wenger\footnotemark[1]}
\begin{document}

\maketitle

\begin{abstract}
A {\it distinguishing coloring} of a graph $G$ is a coloring of the vertices so that every nontrivial automorphism of $G$ maps some vertex to a vertex with a different color.
The {\it distinguishing number} of $G$ is the minimum $k$ such that $G$ has a distinguishing coloring where each vertex is assigned a color from $\{1,\ldots,k\}$.
A {\it list assignment} to $G$ is an assignment $L=\{L(v)\}_{v\in V(G)}$ of lists of colors to the vertices of $G$.
A {\it distinguishing $L$-coloring} of $G$  is a distinguishing coloring of $G$ where the color of each vertex $v$ comes from $L(v)$.
The {\it list distinguishing number} of $G$ is the minimum $k$ such that every list assignment to $G$ in which $|L(v)|=k$ for all $v\in V(G)$ yields a distinguishing $L$-coloring of $G$.
We prove that if $G$ is an interval graph, then its distinguishing number and list distinguishing number are equal. 

{\bf Keywords: 05C60; distinguishing; distinguishing number; list distinguishing; interval graph} 
\end{abstract}

\renewcommand{\thefootnote}{\fnsymbol{footnote}}
\footnotetext[1]{
School of Mathematical Sciences, Rochester Institute of Technology, Rochester, NY;
{\tt pgi8114@rit.edu, pswsma@rit.edu}.}
\renewcommand{\thefootnote}{\arabic{footnote}}

\baselineskip18pt

\section{Introduction}

All graphs considered in this paper are finite and simple.
We denote the vertex set of a graph $G$ by $V(G)$.
An {\it isomorphism} from a graph $G$ to a graph $H$ is an adjacency-preserving bijection from $V(G)$ to $V(H)$.
An {\it automorphism} is an isomorphism from a graph to itself.
A {\it k-coloring} of a graph $G$ is a labeling of the vertices $\phi: V(G) \rightarrow \{ 1,2,\ldots,k \}$ (henceforth we adopt the standard notation $[k]$ for $\{1,\ldots,k\}$). 
A k-coloring of $G$ is {\it distinguishing} if every nontrivial automorphism of $G$ maps some vertex to a vertex with a different color. 
In \cite{albertson96}, Albertson and Collins introduced the {\it distinguishing number} of a graph $G$, denoted $D(G)$, which is the minimum $k$ such that $G$ has a distinguishing $k$-coloring.

Distinguishing numbers have been studied on a wide variety of graphs including those with dihedral automorphism groups~\cite{albertson96}, cartesian products of graphs~\cite{albertson,FI,IJK,imrich06,KZ}, trees \cite{cheng06}, interval graphs \cite{cheng09}, and planar graphs~\cite{arvind08}.
Beyond finite graphs, distinguishing numbers have been studied on infinite graphs~\cite{imrich07}, vector spaces~\cite{KWZ}, and the unit sphere~\cite{FGHSW2,lombardi}.

A natural generalization for coloring parameters is to consider the implication of different vertices having different sets of available colors rather than all taking their color from the set $[k]$.
A {\it list assignment} on a graph $G$ is an assignment of a list of colors to each vertex in $G$; we denote the list assigned to vertex $v$ by $L(v)$ and the whole assignment as $L=\{L(v)\}_{v\in V(G)}$.
A {\it list-coloring} (respectively {\it $L$-coloring}) of $G$ is a coloring of the vertices of $G$ where the color of each vertex $v$ is taken from its list (respectively $L(v)$).
There is a long history of studying proper list colorings, which were introduced independently by Vizing~\cite{vizing} and Erd\H os, Rubin, and Taylor~\cite{ERT}.
In~\cite{ferrara11}, Ferrara, Felsch, and Gethner began the study of the list variant of the distinguishing number.
They defined the {\it list distinguishing number} of a graph $G$, denoted $D_{\ell}(G)$, to be the minimum $k$ such that given any list assignment $L=\{L(v)\}_{v\in V(G)}$ in which $|L(v)|=k$ for all $v\in V(G)$ there is a distinguishing $L$-coloring.
Their results included determining the list distinguishing number of graphs with dihedral automorphism groups and also cartesian products of cycles.
At the end of their paper they also asked the following question.
\begin{question}\label{FFG}
Does there exist a graph $G$ such that $D(G)\neq D_{\ell}(G)$?
\end{question}

Note that it is clear that $D(G)\le D_{\ell}(G)$.
Specifically, if $D_{\ell}(G)=k$, then the assignment of the list $[k]$ to each vertex yields a distinguishing coloring, so $D(G)\le k$.
Therefore a negative answer to Question~\ref{FFG} requires a general proof that $D_{\ell}(G)\le D(G)$ for all graphs.

In~\cite{ferrara13}, Ferrara, Gethner, Hartke, Stolee, and Wenger proved that $D_\ell(G) = D(G)$ when $G$ is a tree.
To do this, they adapted an enumerative technique for determining distinguishing numbers of trees that was developed independently by Cheng~\cite{cheng06} and Arvind and Devanur~\cite{nordicpaper}.
This technique consists of utilizing the structure of trees to recursively determine the number of distinguishing $k$-colorings of a tree; this value is the distinguishing number analogue of the chromatic polynomial.
The distinguishing number is the minimum positive integer value of $k$ for which the number of distinguishing $k$-colorings is positive.
Ferrara et al.\ modified this method to count the number of distinguishing $L$-colorings of a tree when it has been given a list assignment $L$.

In this paper, we use this enumerative method to study the list distinguishing number of interval graphs.
An {\it interval representation} of a graph is an assignment of intervals in the real line to the vertices of the graph so that two vertices are adjacent if and only if their intervals intersect.
A graph is an {\it interval graph} if it has an interval representation.
In~\cite{cheng09}, Cheng developed an enumerative algorithm to determine the distinguishing number of an interval graph, and in this paper we adapt this algorithm to prove the following theorem.
\begin{theorem}\label{thm:equality}
	If $G$ is an interval graph, then $D_{\ell}(G) = D(G)$.
\end{theorem}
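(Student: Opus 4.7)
The plan is to mirror the strategy of Ferrara, Gethner, Hartke, Stolee, and Wenger~\cite{ferrara13} from the tree setting. Namely, start with Cheng's~\cite{cheng09} algorithm that enumerates the distinguishing $k$-colorings of an interval graph $G$ and promote it to a procedure that, given an arbitrary list assignment $L$ with $|L(v)|=k$ for every $v\in V(G)$, produces a positive lower bound on the number of distinguishing $L$-colorings whenever $k\ge D(G)$. Since $D(G)\le D_\ell(G)$ is immediate, the reverse inequality $D_\ell(G)\le D(G)$ would follow, yielding equality.

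The execution requires a firm hold on the structure of interval graphs. I would use the canonical decomposition underlying Cheng's algorithm: an interval graph can be decomposed along its twin classes (sets of vertices sharing a common open or closed neighborhood) and along substructures that admit a left-right reversal, producing a rooted decomposition tree whose automorphism group embeds into an iterated wreath product of symmetric groups (acting on twin classes) and copies of $\mathbb{Z}/2\mathbb{Z}$ (acting on reversible pieces). A coloring is distinguishing precisely when, at every node of this tree, the induced color pattern is rigid: within each twin class the refined color profile must be aperiodic under the relevant $S_m$-action, and at each reversible piece the refined profile must not be palindromic.

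Following this decomposition, I would define, in the spirit of the distinguishing chromatic polynomial used for trees, a function $f(T,L)$ that returns a lower bound on the number of distinguishing $L$-colorings of the subgraph represented by the subtree rooted at node $T$. The recursion proceeds bottom up: at an internal node we combine the counts at the children, subtracting (by inclusion--exclusion on the local symmetry group) configurations that fail to be rigid. The new ingredient relative to Cheng's setting is that the counts at each node must be expressed as sums over choices of colors drawn from individual lists rather than as a product in a single parameter $k$; the outer bookkeeping is unchanged, but each elementary term becomes a selection from a possibly distinct list.

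The main obstacle, as in~\cite{ferrara13}, is the twin-class step. Given a twin class of $m$ vertices whose attached substructures come with refined list counts $a_1,\dots,a_m$ at each available color, one needs to bound from below the number of assignments whose resulting multiset of refined color data is fixed by no nontrivial permutation in $S_m$. When every list has uniform size $k$ and Cheng's count in the $[k]$-case is positive, the corresponding list count must also be positive; the delicate point is turning Cheng's polynomial positivity argument into a Hall-type or permanent-type statement that tolerates arbitrary size-$k$ lists. Once this local lemma is established, it propagates through the decomposition tree by straightforward induction, and combining it with the analogous, easier argument for reversal-type symmetries yields the theorem.
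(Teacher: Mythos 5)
Your overall route is the same as the paper's---adapt Cheng's PQ-tree-based enumeration~\cite{cheng09} and show by induction over the decomposition that any size-$k$ list assignment admits at least as many equivalence classes of distinguishing $L$-colorings as there are distinguishing $k$-colorings---but the proposal stops exactly where the proof has to begin. The step you yourself flag as ``the delicate point,'' namely the lower bound at a node whose local symmetry is a symmetric group acting on a clone class or on a family of mutually isomorphic child subgraphs, is never proved, and without it the induction has no content. A second, related gap: you only ask for \emph{positivity} of the list count when Cheng's count is positive, but for the recursion to propagate you need the quantitative invariant $D(G_x;L)\ge D(G_x;k)$ at every node, since the parent's count is expressed in the children's counts (e.g.\ through factors such as $\binom{D(G_{x_i};k)}{m_i}$); mere positivity at the children does not by itself force positivity, let alone the right size, at the parent.

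The missing local lemma is in fact far easier than the Hall-type or permanent-type statement you anticipate, because the correct objects to count are equivalence classes of distinguishing $L$-colorings of each piece, not per-color ``refined profiles.'' Vertices sharing a characteristic node (clones) induce a clique, so they merely need pairwise distinct colors, and a greedy count gives at least $k(k-1)\cdots(k-m+1)/m!=\binom{k}{m}$ classes no matter how the size-$k$ lists differ. Likewise, at a P-node with $m_i$ isomorphic children each having, by induction, at least $d_i=D(G_{x_i};k)$ classes of distinguishing $L$-colorings, one only needs to choose pairwise inequivalent classes for them, and the same greedy count yields at least $\binom{d_i}{m_i}$ choices; no inclusion--exclusion over $S_{m_i}$ is required. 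The only remaining local symmetry is the possible reversal at a Q-node, which the paper handles by coloring the left half arbitrarily and observing that at most one equivalence class of colorings of the right half mirrors it, producing the $-1$ and the factor $\tfrac12$ that match Cheng's formula. Finally, the decomposition is organized by characteristic nodes and spans in the labeled PQ-tree (Colbourn--Booth), with clones and older twins, rather than by neighborhood twin classes as you describe; your wreath-product picture of the automorphism group is morally right, but the proof needs the PQ-tree bookkeeping to state and verify the recursions precisely.
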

Specifically, we will prove that if $G$ is an interval graph and $L=\{L(v)\}_{v\in V(G)}$ is a list assignment in which every list has size $k$, then there are at least as many distinguishing $L$-colorings of $G$ as there are distinguishing $k$-colorings.
Thus $D_{\ell}(G)\le D(G)$.

In Section~\ref{sec:preliminaries}, we establish terminology and notation, and outline Cheng's algorithm for determining the distinguishing number of an interval graph.
This includes a discussion of PQ-trees, a data structure developed by Booth and Lueker~\cite{booth76} that is used in a classification algorithm for interval graphs~\cite{booth79}.
In Section~\ref{sec:proof}, we prove Theorem~\ref{thm:equality}.

\section{Preliminaries}\label{sec:preliminaries}

Let $G$ be a graph.
Two colorings $c$ and $c'$ of $G$ are {\it equivalent} if there is an isomorphism $\varphi$ of $G$ such that $c(v) = c'(\varphi(v))$ for all $v\in V(G)$.
Let $D(G;k)$ denote the number of equivalence classes of distinguishing $k$-colorings of $G$.
Note that $D(G)$ is the minimum positive integer $k$ for which $D(G;k)>0$.
Similarly, given a list assignment $L=\{L(v)\}_{v\in V(G)}$, let $D(G;L)$ denote the number of equivalence classes of distinguishing $L$-colorings of $G$.

Cheng's algorithm computes $D(G;k)$ when $G$ is an interval graph by recursively computing $D(G;k)$.
This algorithm uses PQ-trees, which were defined by Booth and Lueker in~\cite{booth76}. Throughout this paper we will be working with a graph $G$ and a corresponding PQ-tree.
For clarity we will refer to the vertices of $G$ as vertices and the vertices of the PQ-tree as {\it nodes}. 
Formally, a {\it PQ-tree} is a rooted ordered tree whose non-leaf nodes are classified as P-nodes or Q-nodes.
It is {\it proper} if every P-node has at least two children and every Q-node has at least three children. 
A {\it transformation} of a PQ-tree is a permutation of the order of the children of a P-node or the reversal of the order of the children of a Q-node.
Two PQ-trees that differ only by a sequence of transformations are {\it equivalent}.
The {\it frontier} of $T$ is the ordering of its leaves read from left to right, and the {\it frontier} of a node $x$ in $T$ is the frontier of the subtree rooted at $x$.
We let $T_x$ denote the subtree of $T$ rooted at $x$ that includes $x$ and all its descendants.

To build a PQ-tree from an interval graph $G$, we exploit the following characterization of interval graphs due to Fulkerson and Gross~\cite{FG}.
An ordering of the maximal cliques of a graph $G$ satisfies the {\it consecutiveness property} if for every vertex $v$, the maximal cliques that contain $v$ appear consecutively in the ordering.
\begin{theorem}\label{thm:IntervalGrChar}(Fulkerson-Gross~\cite{FG})
A graph $G$ is an interval graph if and only if there is an ordering of the maximal cliques of $G$ that satisfies the consecutiveness property.
\end{theorem}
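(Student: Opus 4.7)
The plan is to prove both directions by direct construction. For the forward direction, I would use the one-dimensional Helly property to extract a representative point from each maximal clique and order the cliques by these points. For the backward direction, I would use each vertex's positions in the clique ordering to manufacture an interval for it.

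For the forward direction, suppose $G$ has an interval representation $\{I_v : v \in V(G)\}$. For each maximal clique $C$, the intervals $\{I_v : v \in C\}$ pairwise intersect, so by the Helly property on the line they share a common point $p_C$. I would order the maximal cliques by increasing $p_C$. The key observation is that for a maximal clique $C$ and a vertex $v$, we have $v \in C$ if and only if $p_C \in I_v$: the forward implication is immediate, and for the reverse, $p_C \in I_v$ forces $v$ to be adjacent to every vertex of $C$, so maximality of $C$ gives $v \in C$. Because $I_v$ is a connected subset of $\RR$, the cliques $C$ with $p_C \in I_v$ form a consecutive block of the ordering, which is exactly the consecutiveness property. (Distinct maximal cliques automatically receive distinct values of $p_C$, since $p_C = p_{C'}$ would force $C = C'$ by the same observation, so tie-breaking is unnecessary.)

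For the backward direction, let $C_1, \ldots, C_m$ be an ordering of the maximal cliques that satisfies the consecutiveness property. For each vertex $v$, define $\ell(v) = \min\{i : v \in C_i\}$ and $r(v) = \max\{i : v \in C_i\}$, and assign $I_v = [\ell(v), r(v)]$. Consecutiveness gives $\{i : v \in C_i\} = \{\ell(v),\ldots,r(v)\}$. If $uv \in E(G)$, then some maximal clique $C_i$ contains both $u$ and $v$, so $i \in I_u \cap I_v$. Conversely, if $I_u \cap I_v \neq \varnothing$ and $\ell(u) \le \ell(v)$, then $\ell(v) \le r(u)$, which shows that both $u$ and $v$ lie in $C_{\ell(v)}$ and are therefore adjacent.

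Neither direction presents a real obstacle; each reduces to a single structural observation. The only slightly delicate step is the forward direction's use of maximality to pass from ``$p_C \in I_v$'' to ``$v \in C$'', since without maximality we would only know that $v$ is adjacent to every vertex of $C$. Beyond that, the Helly property for intervals is elementary (take the maximum of the left endpoints of the $I_v$ for $v \in C$), and the backward construction is essentially immediate.
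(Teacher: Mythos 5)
Your proof is correct. Note that the paper itself gives no proof of this statement---it is quoted as a classical result of Fulkerson and Gross with a citation---so there is nothing internal to compare against; your argument (a Helly point $p_C$ for each maximal clique in the forward direction, and the interval of clique-indices $[\ell(v),r(v)]$ in the backward direction) is the standard proof of this characterization, and the one delicate step you flag, using maximality of $C$ to pass from ``$p_C\in I_v$'' to ``$v\in C$,'' is handled correctly.
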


Given a graph $G$, a {\it PQ-tree for $G$} is a PQ-tree $T$ such that a) the leaves of $T$ correspond to the maximal cliques of $G$ and b) the set of the frontiers of all PQ-trees that are equivalent to $T$ is the set of all orderings of the maximal cliques of $G$ that satisfy the consecutiveness property.
It follows from Theorem~\ref{thm:IntervalGrChar} that if $G$ has a $PQ$-tree, then $G$ is an interval graph.
Booth and Lueker proved the converse holds as well.
\begin{theorem}\label{thm:BLPQ-tree}(Booth-Lueker~\cite{booth76})
If $G$ is an interval graph, then a proper PQ-tree for $G$ can be constructed and it is unique up to a sequence of transformations.
\end{theorem}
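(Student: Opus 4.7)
The plan is to establish existence constructively via the Booth--Lueker REDUCE procedure and then argue uniqueness from the fact that a proper PQ-tree is determined by its set of equivalent frontiers.

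For existence, first I would introduce the key subroutine: given a PQ-tree $T$ and a subset $S$ of its leaves, REDUCE$(T,S)$ produces a PQ-tree $T'$ whose equivalent frontiers are exactly those frontiers of $T$ in which the leaves of $S$ appear consecutively, or else reports that no such frontier exists. The procedure labels each node as \emph{full} (all leaf descendants lie in $S$), \emph{empty} (none do), or \emph{partial} (some but not all), and then applies a finite library of local rewrites at partial nodes, working bottom-up, so that after processing the full descendants of every node form a consecutive block. Next I would build the PQ-tree for $G$: let $C_1,\ldots,C_m$ be the maximal cliques of $G$ and, for each $v\in V(G)$, set $S_v=\{C_i \st v\in C_i\}$. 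Start from the universal tree $T_0$, a single P-node with leaf children $C_1,\ldots,C_m$, whose equivalent frontiers are all $m!$ orderings of the cliques. Iteratively apply REDUCE for $S_{v_1},S_{v_2},\ldots$ as $v_i$ ranges over $V(G)$. By Theorem~\ref{thm:IntervalGrChar} there exists an ordering of the maximal cliques satisfying the consecutiveness property, so no call to REDUCE can fail. The resulting tree $T$ has, by construction, as its set of equivalent frontiers exactly the orderings of the maximal cliques that satisfy the consecutiveness property, which is property~(b); property~(a) is automatic. A cleanup pass that contracts P-nodes with a single child and merges Q-nodes with fewer than three children into their parents yields a proper PQ-tree, establishing existence.

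For uniqueness, suppose $T$ and $T'$ are two proper PQ-trees for $G$; then they have the same set of equivalent frontiers. I would proceed by induction on the number of internal nodes. The number of orderings, together with the patterns of common prefixes and reversals, forces the two roots to have the same type (P or Q) and the same number of children. One can match subtrees by considering which leaves appear together at the start of some frontier, which identifies the children of each root. The matched subtrees then represent the same restricted sets of orderings, and the induction hypothesis forces them to be equivalent, so $T$ and $T'$ differ only by a sequence of permutations of P-children and reversals of Q-children.

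The main obstacle is the correctness of REDUCE, that is, verifying that the finite collection of local rewrites is both sound (each preserves exactly the frontiers where $S$ becomes consecutive) and complete (for every configuration of empty/full/partial labels at a node that admits \emph{any} valid reduction, one of the templates applies). This requires a careful case analysis on the number of partial children and the arrangement of full children on their boundaries, and it is precisely where the bulk of the work in~\cite{booth76} lies; the rest of the argument above is essentially bookkeeping once REDUCE is in hand.
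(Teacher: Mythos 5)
This statement is not proved in the paper at all: it is quoted as a known result of Booth and Lueker and used as a black box, so there is no ``paper proof'' to match your argument against. Your outline does follow the original route of~\cite{booth76} (build the tree by successive REDUCE operations on the universal tree, one per vertex set $S_v$, then argue uniqueness of the proper tree from its set of consistent frontiers), so the approach is the right one. But as a proof it has genuine gaps, the largest of which you concede yourself: the soundness and completeness of the REDUCE template matching is the entire technical content of the existence half of the theorem, and deferring it means the existence claim is not actually established. In particular, without the completeness argument you cannot conclude that a REDUCE call never fails merely because Theorem~\ref{thm:IntervalGrChar} guarantees \emph{some} consecutive ordering exists -- that inference needs the invariant that after each call the tree's consistent frontiers are \emph{exactly} the orderings in which the sets processed so far are consecutive, which is precisely what the template analysis proves.

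The uniqueness half is also under-specified in a way that matters. Your proposed matching criterion -- identifying children of the root by ``which leaves appear together at the start of some frontier'' -- does not work as stated: for a Q-node root only the two extreme children's frontiers can head a consistent frontier, so this test cannot identify the interior children, and for a P-node root it does not by itself separate the children's leaf sets. The standard argument instead reconstructs the tree from the family of leaf sets that are consecutive in \emph{every} consistent frontier (these are exactly frontiers of single nodes and unions of consecutive children of Q-nodes), and that reconstruction is what forces two proper PQ-trees with the same frontier set to be equivalent; properness is essential here and is exactly why the statement restricts to proper trees. Relatedly, your ``cleanup pass'' is slightly off: a Q-node with exactly two children should be converted to a P-node, not merged into its parent (merging into a Q-node parent changes the admissible orderings). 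None of this says the theorem is in doubt -- it is a correct, classical result -- but your write-up is an outline of \cite{booth76} with its two load-bearing lemmas (template correctness and frontier-set uniqueness) left unproved, rather than a self-contained proof.
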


The proper PQ-trees of a graph $G$ form an equivalence class under transformations, and we use $T(G)$ to denote a representative of this equivalence class.
It is straightforward to check that if $G$ and $G'$ are isomorphic, then $T(G)$ and $T(G')$ are also isomorphic (see~\cite{cheng09} for details).
However, there are nonisomorphic graphs $G$ and $G'$ such that $T(G)$ and $T(G')$ are isomorphic.
To ensure that PQ-trees uniquely determine a corresponding interval graph, Lueker and Booth~\cite{booth79} later introduced a scheme for labeling the nodes of PQ-trees.
For each vertex $v\in V(G)$, the {\it characteristic node} of $v$, denoted $char(v)$, is the deepest node in $T(G)$ whose frontier contains all maximal cliques of $G$ that contain $v$. 
For each node $x$ in $T(G)$, we let $char^{-1}(x)$ denote the preimage of $x$ under the function $char$, that is, the set of vertices in $G$ whose characteristic node is $x$.
Given $x\in T(G)$, label the children of $x$ as $y_1,\ldots,y_h$ reading from left to right.
Lueker and Booth proved that for each vertex $v\in char^{-1}(x)$, there is a set of consecutive indices $span(v)=[i,j]=\{i,i+1,\ldots,j\}$ such that the union of the frontiers of $y_i,\ldots,y_j$ is exactly the set of maximal cliques in $G$ that contain $v$.
Furthermore, if $x$ is a P-node, then every vertex in $char^{-1}(x)$ has span $[1,h]$.
Each node $x$ of $T(G)$ is then labeled with $|char^{-1}(x)|$ if $x$ is a P-node and the multiset of the spans of the vertices in $char^{-1}(x)$ if $x$ is a Q-node.

Colbourn and Booth proved that, given the additional information of characteristic nodes and spans, the labeled PQ-tree of an interval graph $G$ determines the automorphisms of $G$.
\begin{theorem}(Colbourn-Booth~\cite{colbourn81})
Let $G$ be an interval graph and let $T(G)$ be its PQ-tree.
Every atuomorphism of $T(G)$ induces a distinct automorphism on $G$.
Conversely, every automorphism of $G$ is completely determined by an automorphism of $T(G)$ together with a permutation of the vertices with the same characteristic nodes and spans.
\end{theorem}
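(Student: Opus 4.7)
The plan is to verify the two directions of the correspondence between automorphisms of the labeled PQ-tree $T(G)$ and automorphisms of $G$. Throughout, I interpret an automorphism of $T(G)$ as an equivalence-class representative: a sequence of P-node permutations and Q-node reversals of $T(G)$ that yields the same labeled PQ-tree, which is naturally encoded by a label-preserving bijection $\sigma$ on the nodes of $T(G)$ that sends P-nodes to P-nodes and Q-nodes to Q-nodes and (up to reversal) preserves the left-to-right order of children at each Q-node.

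\emph{Step 1: From $\mathrm{Aut}(T(G))$ to $\mathrm{Aut}(G)$.} Given such a $\sigma$, I first extract the induced permutation $\sigma^{*}$ on the leaves of $T(G)$, i.e. on the maximal cliques of $G$. I then define a map $\phi_{\sigma}$ on $V(G)$ by sending each $v\in \mathrm{char}^{-1}(x)$ to some $v'\in \mathrm{char}^{-1}(\sigma(x))$ whose span matches the $\sigma$-image of $\mathrm{span}(v)$ under $\sigma$'s action on the children of $x$. The fact that $\sigma$ preserves labels guarantees that such a $v'$ exists: at a P-node both sides have the same cardinality of $\mathrm{char}^{-1}$ and every vertex carries the full span $[1,h]$, while at a Q-node the multisets of spans on $x$ and $\sigma(x)$ agree (up to the reversal that $\sigma$ may apply). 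The key structural fact to check is that $\phi_{\sigma}$ is adjacency-preserving: two vertices $u,v$ are adjacent in $G$ iff their characteristic spans share a maximal clique in some (equivalently every) consecutiveness ordering, and $\sigma^{*}$ carries the set of maximal cliques containing $u$ bijectively onto the set containing $\phi_{\sigma}(u)$. Distinctness is then transparent: distinct label-preserving bijections $\sigma_{1}\neq \sigma_{2}$ yield distinct permutations $\sigma_{1}^{*}\neq \sigma_{2}^{*}$ of the maximal cliques, so no common graph automorphism can arise from both, even after composing with internal permutations within the $\mathrm{char}^{-1}$ fibers (since those internal permutations fix each characteristic node and each span).

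\emph{Step 2: From $\mathrm{Aut}(G)$ to $\mathrm{Aut}(T(G))$ plus internal data.} Conversely, take $\phi\in \mathrm{Aut}(G)$. Since $\phi$ preserves adjacency, it permutes the maximal cliques of $G$, yielding a bijection $\phi^{*}$ on the leaves of $T(G)$. Applying $\phi^{*}$ to any consecutiveness-satisfying ordering of the maximal cliques produces another such ordering, because whether a vertex's cliques form a consecutive block is an intrinsic property of $G$. By Theorem~\ref{thm:BLPQ-tree}, the new ordering is a frontier of a PQ-tree equivalent to $T(G)$, so $\phi^{*}$ is realized by a sequence of P-node permutations and Q-node reversals; this sequence is precisely a labeled PQ-tree automorphism $\sigma$. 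By Step 1, $\phi_{\sigma}$ agrees with $\phi$ on characteristic-node assignments and on spans, so $\phi\circ \phi_{\sigma}^{-1}$ fixes the characteristic node and span of every vertex. The residual map is therefore exactly a product of permutations, each internal to a single set $\mathrm{char}^{-1}(x)$ further restricted to vertices sharing a span.

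The main obstacle, as I see it, is in Step 1: pinning down the correct notion of ``label-preserving automorphism'' of the labeled PQ-tree and verifying that the map $\phi_{\sigma}$ is well-defined and adjacency-preserving. The delicate part is at Q-nodes, where a reversal changes the direction in which spans are read; one must track carefully that $\sigma$'s effect on the children of a Q-node carries each span $[i,j]$ to either $[\sigma(i),\sigma(j)]$ or its reverse, and confirm that the multiset label at the Q-node is invariant under this transformation. Once this bookkeeping is established, both steps follow from the canonical correspondence between frontiers of $T(G)$ and consecutiveness orderings of the maximal cliques guaranteed by Theorems~\ref{thm:IntervalGrChar} and \ref{thm:BLPQ-tree}.
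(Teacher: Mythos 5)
You should first note that the paper contains no proof of this statement at all: it is quoted verbatim from Colbourn and Booth~\cite{colbourn81} and used as a black box, so there is no internal argument to compare yours against. Your outline does follow the standard route (the clique/leaf correspondence plus label-preserving symmetries of the labeled PQ-tree), which is essentially the Colbourn--Booth/Lueker--Booth strategy, but as written it has two genuine gaps, both located exactly at the points you flag as ``bookkeeping'' and then do not carry out.

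First, in Step 1 the map $\phi_\sigma$ is defined by sending $v\in char^{-1}(x)$ to ``some'' $v'\in char^{-1}(\sigma(x))$ with matching span; this is not yet a well-defined automorphism. You must fix a bijection between the fibers (possible because label preservation gives equal multiplicities of each span), and then argue that adjacency is independent of that choice. The adjacency-preservation claim itself is not established: it rests on the Lueker--Booth fact, stated in Section~2, that the set of maximal cliques containing $v$ is exactly the union of the frontiers of the children indexed by $span(v)$, together with the observation that $u\sim v$ iff these clique sets intersect; and at a Q-node you must verify that a reversal carries $span(v)=[i,j]$ to $[h+1-j,h+1-i]$ and that the multiset label is invariant under this, otherwise $\sigma^*$ need not carry the clique set of $u$ onto that of $\phi_\sigma(u)$. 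Second, in Step 2 the inference from ``$\phi^*$ maps every consecutiveness ordering to a consecutiveness ordering'' to ``$\phi^*$ is realized by a sequence of P-node permutations and Q-node reversals of $T(G)$'' is the heart of the Booth--Lueker uniqueness theorem, not a formality: you need that a leaf permutation preserving the family of valid frontiers extends to a node-level automorphism (for instance because, in a proper PQ-tree, each internal node is determined by its leaf-descendant set, and these sets are characterized as clique sets appearing consecutively in every valid ordering), and separately that the resulting tree automorphism preserves the labels ($|char^{-1}(x)|$ at P-nodes, span multisets at Q-nodes), which uses that $\phi$ is a graph automorphism rather than merely a permutation of maximal cliques. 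Until those two verifications are written out, the argument is a plausible sketch of the cited theorem rather than a proof.
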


To determine the distinguishing number of interval graphs, Cheng introduced terminology to describe how vertices with the same characteristic node are related.
Let $G$ be an interval graph.
Let $x$ be a Q-node in $T(G)$ and assume that $x$ has $h$ children.
Two vertices $a$ and $a'$ in $char^{-1}(x)$ are {\it clones} if $span(a) = span(a')$.
They are {\it twins} if $span(a) = [i_a, j_a]$ and $span(a') = [h+1-j_a, h+1-i_a]$. 
We say that $a$ is the {\it older twin} if $i_a \geq h+1-j_a$. 
A set $A$ is a {\it representative set} for $char^{-1}(x)$ if it is a smallest subset of $char^{-1}(x)$ that contains a clone of every $a \in char^{-1}(x)$.
A subset $A'$ of $A$ is a {\it subrepresentative set} for $A$ if it is a smallest subset of $A$ that contains a clone or older twin of every vertex in $char^{-1}(x)$.

Given a node $x\in T(G)$, let $G_x$ denote the subgraph of $G$ that is induced by the vertices whose characteristic nodes lie in the subtree of $T(G)$ rooted at $x$.
Cheng's algorithm for computing $D(G;k)$ for an interval graph $G$ consists of first constructing the PQ-tree $T(G)$.
The algorithm then completes a postorder traversal of $T(G)$, computing the number of distinguishing $k$-colorings of $G_x$ at each node using the following two theorems. 
The first applies when the root of $T(G)$ is a P-node.
For our purposes, we treat the leaves of $T(G)$ as P-nodes with no children.

\begin{theorem}\label{lemma:pnode}(Cheng~\cite{cheng09}) 
Let $G$ be an interval graph, let $T(G)$ be its PQ-tree, and let $r$ be the root of $T(G)$.
Suppose that $r$ is a P-node, there are $s$ isomorphism classes in $\mathcal G=\{G_x\st x \text{ is a child of }r\}$, and the $i$th isomorphism class contains $m_i$ copies of $G_{x_i}$.
Let $n_{r}=|char^{-1}(r)|$.
The number of distinguishing $k$-colorings of $G$ is given by 
\[ D(G;k) = \binom{k}{n_{r}} \prod_{i=1}^s \binom{D(G_{x_i}; k)}{m_i}. \] 
\end{theorem}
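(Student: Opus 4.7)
The plan is to decompose a distinguishing $k$-coloring of $G$ into its restriction to $R := char^{-1}(r)$ together with its restrictions to the subgraphs $G_{x_j}$ indexed by the children of $r$, and then count the equivalence classes of such colorings using the Colbourn--Booth description of $\operatorname{Aut}(G)$.

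First I would pin down the structure of $G$ at a P-node root $r$ with children $y_1,\ldots,y_h$. Every $v\in R$ has span $[1,h]$, so $v$ lies in every maximal clique of $G$; hence $R$ is a clique of universal vertices. Since the frontiers of the subtrees rooted at distinct children of $r$ partition the maximal cliques of $G$, the subgraphs $G_{x_j}$ are pairwise vertex-disjoint and non-adjacent, so $G$ is the join of $R$ with $\bigsqcup_j G_{x_j}$.

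Next I would apply the Colbourn--Booth theorem to unpack $\operatorname{Aut}(G)$. Automorphisms of $T(G)$ at a P-node freely permute the children subject to preserving subtree isomorphism type, and the theorem supplies independent permutations of any set of vertices sharing a characteristic node and span; in particular, every permutation of $R$ gives an automorphism. This produces a sharp characterization of distinguishing colorings $c$: (i) $c$ is injective on $R$, otherwise a transposition in $R$ preserves $c$; (ii) each restriction $c|_{G_{x_j}}$ is itself a distinguishing coloring of $G_{x_j}$, because any nontrivial automorphism of $G_{x_j}$ extends by the identity to a color-preserving automorphism of $G$; and (iii) whenever $G_{x_j}\cong G_{x_{j'}}$ with $j\ne j'$, the induced colorings on these two subgraphs are inequivalent, otherwise the witnessing isomorphism together with its inverse yields a color-preserving swap in $G$.

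Finally I would count equivalence classes. Two distinguishing colorings $c,c'$ are equivalent iff they use the same $n_r$-element color set on $R$ and, for some permutation $\sigma$ matching children within isomorphism classes, each $c|_{G_{x_j}}$ is equivalent to $c'|_{G_{x_{\sigma(j)}}}$ via an isomorphism $G_{x_j}\to G_{x_{\sigma(j)}}$. Consequently each equivalence class is specified by an unordered $n_r$-subset of $[k]$, contributing the factor $\binom{k}{n_r}$, together with, for each isomorphism class containing $m_i$ copies of $G_{x_i}$, an unordered $m_i$-subset of the $D(G_{x_i};k)$ equivalence classes of distinguishing colorings of $G_{x_i}$, contributing the factor $\binom{D(G_{x_i};k)}{m_i}$. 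Multiplying these independent choices yields the stated formula. The step I expect to require the most care is verifying condition (iii) together with the corresponding unordered counting: one must confirm that every color-preserving automorphism of $G$ is assembled from the Colbourn--Booth components, so that distinct unordered selections genuinely index distinct equivalence classes and that no pair is secretly identified.
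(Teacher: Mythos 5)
This theorem is stated in the paper as a quoted result of Cheng~\cite{cheng09} and is not proved there, so there is no internal proof to compare against; your argument is a correct reconstruction of the standard one. Your decomposition --- $G$ is the join of the clique $char^{-1}(r)$ of universal vertices with the disjoint union of the $G_{x_j}$, Colbourn--Booth guarantees every automorphism (hence every color-preserving one) respects this decomposition, and each equivalence class is then indexed by an $n_r$-subset of $[k]$ together with, for each isomorphism class, an unordered $m_i$-subset of the $D(G_{x_i};k)$ equivalence classes --- is exactly the structure the paper itself exploits in Case~1 of its proof of Theorem~\ref{thm:equality}, where the same bookkeeping is carried out (there only as a lower bound, in the list setting).
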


Now assume that the root $r$ of $T(G)$ is a Q-node, and let $x_1,x_2,\ldots,x_h$ be the children of $r$ ordered from left to right.
There are two types of automorphism of $G$: those that map $G_{x_i}$ to itself for all $i\in[h]$, and those that map $G_{x_i}$ to $G_{x_{h+1-i}}$ for all $i\in[h]$.
If there are automorphism of the second type, then we say that $r$ is {\it reversible}; otherwise $r$ is {\it not reversible}.

\begin{theorem}\label{lemma:qnode}(Cheng~\cite{cheng09})
Let $G$ be an interval graph, let $T(G)$ be its PQ-tree, and let $r$ be the root of $T(G)$.
Suppose that $r$ is a Q-node and let $x_1, x_2,\ldots,x_{h}$ be the children of $r$ ordered from left to right. 
Let $A$ be a representative set of $char^{-1}(r)$ and let $A'$ be a subrepresentative set of $A$.
For each $a\in A$, let $m_a$ denote the number of clones of $a$.
If $r$ is not reversible, then the number of distinguishing colorings of $G$ is given by
\[D(G;k) = \prod_{a \in A} \binom{k}{m_a} \prod_{i=1}^h D(G_{x_i};k).\]
If $r$ is reversible, then the number of distinguishing colorings of $G$ is given by
\[D(G;k) = \frac{1}{2} \left(\prod_{a \in A} \binom{k}{m_a} \prod_{i=1}^{h} D(G_{x_i};k) - \prod_{a \in A'} \binom{k}{m_a} \prod_{i=1}^{\CL{h/2}} D(G_{x_i};k)\right).\]
\end{theorem}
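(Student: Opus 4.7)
My plan is to count equivalence classes of distinguishing $k$-colorings of $G$ by decomposing each coloring according to the structure of $T(G)$ at the root: its restrictions to the subgraphs $G_{x_1},\ldots,G_{x_h}$ together with the induced coloring of $char^{-1}(r)$. The key input is the Colbourn-Booth classification, which tells us that every automorphism of $G$ arises from an automorphism of $T(G)$ composed with a permutation of vertices sharing both characteristic node and span. Since $r$ is a Q-node, the automorphisms of $T(G)$ that fix $r$ either preserve the left-to-right order of its children (and hence restrict to an automorphism on each subtree $T(G)_{x_i}$) or, if $r$ is reversible, reverse that order.

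In the non-reversible case, every automorphism of $G$ restricts to an automorphism of each $G_{x_i}$ and acts on $char^{-1}(r)$ only by permuting clones within each clone class. Thus a coloring $c$ is distinguishing if and only if (i) its restriction to each $G_{x_i}$ is a distinguishing coloring of $G_{x_i}$, which rules out automorphisms acting within a single subtree, and (ii) for each representative $a\in A$, the $m_a$ clones of $a$ receive $m_a$ distinct colors, which rules out nontrivial clone permutations. Since distinguishing colorings have trivial stabilizer, the equivalence classes factor as a product, yielding $\prod_{a\in A}\binom{k}{m_a}\prod_{i=1}^h D(G_{x_i};k)$.

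In the reversible case, let $H$ denote the index-$2$ subgroup of non-reversing automorphisms of $G$. The non-reversible analysis applied to $H$ gives $N:=\prod_{a\in A}\binom{k}{m_a}\prod_{i=1}^h D(G_{x_i};k)$ as the number of $H$-equivalence classes of colorings that are distinguishing with respect to $H$. A coloring in such a class fails to be distinguishing with respect to the full automorphism group exactly when some reversing automorphism color-preserves it. I will show that such ``reversal-symmetric'' colorings are determined by their restriction to the left half of the tree together with the clone colors of the older-twin representatives in $A'$, since the reversal forces the coloring on $G_{x_i}$ to match that on $G_{x_{h+1-i}}$ and forces the clone colors of each twin to match those of its older twin. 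This produces the count $N_1:=\prod_{a\in A'}\binom{k}{m_a}\prod_{i=1}^{\lceil h/2\rceil}D(G_{x_i};k)$.

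The remaining $N-N_1$ $H$-classes correspond to colorings distinguishing under the full group, and reversal induces an involution on them. This involution is fixed-point free: if a reversing automorphism mapped an $H$-class to itself, then composing with a suitable element of $H$ would give a nontrivial color-preserving automorphism of $G$, contradicting that the coloring is distinguishing. Hence full-group equivalence classes correspond to pairs of $H$-classes, producing $\tfrac{1}{2}(N-N_1)$. The step I expect to require the most care is the formula for $N_1$: correctly handling the interaction between twin pairs at $r$ and the reversal action on the sub-PQ-trees, especially when $h$ is odd and the middle subtree must be self-paired under the reversal.
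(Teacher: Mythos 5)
This theorem is stated with citation to Cheng~\cite{cheng09}; the paper itself offers no proof of it, only an adaptation of the same counting in Case~2 of its proof of Theorem~\ref{thm:equality}. Your argument is correct and follows essentially that same approach: use the Colbourn--Booth description of $\mathrm{Aut}(G)$ to factor distinguishing colorings over the clone classes and the subgraphs $G_{x_1},\ldots,G_{x_h}$ (giving the product formula in the non-reversible case and the count $N$ for the non-reversing subgroup), count the reversal-symmetric classes by the left half together with $A'$ (giving $N_1$), and halve the remaining $N-N_1$ classes via the fixed-point-free reversal involution.
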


\section{Proof of Theorem~\ref{thm:equality}}\label{sec:proof}

Given an interval graph $G$ and a list assignment $L = \{L(v)\}_{v \in V(G)}$, let $D(G;L)$ be the the number of equivalence classes of distinguishing $L$-colorings of $G$.
When $G'$ is an induced subgraph of $G$, we let $D(G';L)$ denote the number of equivalence classes of distinguishing $L$-colorings of $G'$ where the lists at the vertices of $G'$ come from the restriction of the list assignment $L$ to $V(G')$.
We follow Cheng's proofs from~\cite{cheng09} of Theorems~\ref{lemma:pnode} and~\ref{lemma:qnode}  to prove that $D(G; L) \geq D(G;k)$. 
We first prove that Theorem~\ref{thm:equality} holds for complete graphs, which we will use several times in the general proof.

\begin{lemma}\label{lemma:complete}
Let $n\ge 1$.
If $L$ is a list assignment on $K_n$ with $|L(v)| = k$ for all $v \in V(K_n)$, then $D(K_n;L)\ge D(K_n;k)=\binom{k}{n}$.
\end{lemma}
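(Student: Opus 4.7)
The plan is to exploit the fact that the automorphism group of $K_n$ is the full symmetric group $S_n$. Since any transposition fixes a coloring that uses the same color twice, a distinguishing coloring of $K_n$ must assign $n$ distinct colors to the vertices. Moreover, any two such colorings using the same $n$-set of colors are equivalent: the bijection that matches each vertex of one coloring to the vertex of the other with the same color is an automorphism of $K_n$. Hence equivalence classes of distinguishing $L$-colorings of $K_n$ are in bijection with the $n$-subsets $S$ of colors that arise as the image of some $L$-coloring. Taking $L(v)=[k]$ for every $v$ gives $D(K_n;k)=\binom{k}{n}$, which settles the equality in the statement.

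To establish the inequality, I would first produce a lower bound on the total number of distinguishing $L$-colorings by a direct greedy argument. Fix any ordering $v_1,\ldots,v_n$ of the vertices; after $v_1,\ldots,v_{i-1}$ have been assigned distinct colors, at most $i-1$ colors are forbidden at $v_i$, so at least $k-(i-1)$ valid choices remain in $L(v_i)$. Multiplying, the number of distinguishing $L$-colorings is at least
\[ k(k-1)\cdots(k-n+1) \;=\; n!\binom{k}{n}, \]
with the case $k<n$ being trivial since $\binom{k}{n}=0$.

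Next I would bound the size of each equivalence class from above by $n!$. Each equivalence class is contained in an $S_n$-orbit of an injective coloring, and such an orbit consists of exactly the $n!$ bijections between the color set $S$ and $V(K_n)$. Combining the two bounds,
\[ D(K_n;L) \;\ge\; \frac{n!\binom{k}{n}}{n!} \;=\; \binom{k}{n}, \]
which is the desired inequality. The only subtlety worth noting is that an equivalence class may in fact contain strictly fewer than $n!$ $L$-colorings, because an arbitrary permutation of an $L$-coloring need not respect the individual lists; but the one-sided bound ``at most $n!$'' is all that the averaging argument requires, so I do not expect any real obstacle.
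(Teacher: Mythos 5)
Your proposal is correct and follows essentially the same route as the paper: a greedy count showing at least $k(k-1)\cdots(k-n+1)$ distinguishing $L$-colorings, combined with the bound of at most $n!$ colorings per equivalence class, yielding $D(K_n;L)\ge\binom{k}{n}$. The extra observations (the $k<n$ case and the fact that a class may be smaller than $n!$) are fine but not needed beyond what the paper's argument already uses.
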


\begin{proof} 
Let $V(K_n)=\{v_1, v_2,\ldots,v_n\}$.
In a distinguishing coloring of $K_n$, all vertices have distinct colors.
Hence $D(K_n;k)=\binom kn$.
Now we count the equivalence classes of distinguishing $L$-colorings.
If we color the vertices in order according to their index, then there are at least $k+1-i$ colors available when we color $v_i$.
There are at most $n!$ colorings that are equivalent to any given coloring, so 
\[ D(K_n;L) \geq \frac{k(k-1)\cdots(k-n + 1)}{n!} = \binom{k}{n} = D(K_n;k).\qedhere\]
\end{proof}

We are now ready to prove Theorem~\ref{thm:equality} in general.

\begin{proof}[Proof of Theorem~\ref{thm:equality}]
Let $G$ be an interval graph and let $L$ be a list assignment on $G$ with $|L(v)| = k$ for all $v \in V(G)$.
Let $T=T(G)$ be the PQ-tree of $G$ and let $r$ be the root of $T$.
We proceed by induction on the number of nodes in $T$ to prove that $D(G;L)\ge D(G;k)$.

If $T$ has only one node, then $G$ has only one maximal clique and therefore $G$ is a complete graph. 
By Lemma~\ref{lemma:complete}, it follows that $D(G;L) \geq D(G;k)$.

Now assume $T$ has at least two nodes.
Let $h$ denote the number of children of $r$; observe that $h\ge 1$ because $T$ is proper by definition.
We consider two cases depending on whether $r$ is a P-node or a Q-node.

{\bf Case 1.} Suppose that $r$ is a P-node. 
Partition the children of $r$ into classes so that two nodes $x$ and $y$ are in the same class if and only if $G_x$ is isomorphic to $G_y$. 
Let $C_1,\ldots,C_s$ denote the equivalence classes of the children of $r$ and label the nodes in class $C_i$ as $x_{i,1}, x_{i,2},\ldots,x_{i,m_i}$ where $m_i = |C_i|$.
Each equivalence class of distinguishing $L$-colorings of $G$ consists of 
\begin{enumerate}
\item an equivalence class of distinguishing $L$-colorings of the subgraph of $G$ induced by $char^{-1}(r)$, and
\item an equivalence class of distinguishing $L$-colorings of $\bigcup_{j=1}^{m_i}G_{x_{i,j}}$ for each $i\in [s]$.
\end{enumerate}

Let $|char^{-1}(r)|=n_r$.
Since $G[char^{-1}(r)]=K_{n_{r}}$, by Lemma~\ref{lemma:complete} there are at least $\binom{k}{n_{r}}$ equivalence classes of distinguishing $L$-colorings of the subgraph of $G$ induced by $char^{-1}(r)$.

Since $r$ is a $P$-node, any nontrivial permutation of children of $r$ that lie in the same equivalence class yields a nontrivial automorphism of $G$.
Since the subtrees $T_{x_{i,j}}$ are isomorphic for all $i\in [s]$ and $j\in[m_i]$, the subgraphs $G_{x_{i,j}}$ for $j\in [m_i]$ must be assigned colorings from $m_j$ distinct equivalence classes.
By induction, $D(G_{x_{i,j}}; L) \geq D(G_{x_{i,j}}; k)$ for all $j \in [m_i]$. 
Set $d_i = D(G_{x_{i,1}}; k)$, and note that $d_i = D(G_{x_{i,j}}; k)$ for all $j\in [m_i]$.
Choosing the colorings in order according to the index $j$, there are at least $d_i - j + 1$ available distinguishing colorings for $G_{x_{i,j}}$.
Thus there are at least $d_i(d_i-1)\cdots(d_i-m_i + 1)$ ways to assign $m_i$ pairwise inequivalent distinguishing colorings to the graphs $G_{x_i,1},\ldots,G_{x_i,m_i}$.
Each choice of the $m_i$ colorings is counted at most $m_i!$ times.
Letting $x_i=x_{i,1}$, it follows that there are at least
\[\frac{d_i(d_i-1)\cdots(d_i-m_i + 1)}{m_i!} = \binom{d_i}{m_i} = \binom{D(G_{x_{i}}; k)}{m_i} \]
equivalence classes of distinguishing $L$-colorings of $\bigcup_{j=1}^{m_i}G_{v_{i,j}}$.
Therefore
\[ D(G; L) \geq \binom{k}{n_{r}} \prod_{i=1}^s \binom{D(G_{x_{i}}; k)}{m_i}\]
and by Theorem~\ref{lemma:pnode},  we conclude that $D(G;L)\ge D(G;k)$.

{\bf Case 2.} Suppose that $r$ is a Q-node.
Let $x_1,\ldots,x_h$ be the children of $r$ ordered from left to right.
Let $A$ be a representative set of $char^{-1}(r)$ and let $A'$ be a subrepresentative set of $A$.
For $a\in A$, let $m_a$ denote the number of clones of $a$ and let $Q_a$ denote the complete subgraph of $G$ induced by the clones of $a$.
There are two cases to consider: when $r$ is not reversible and when $r$ is reversible.

{\bf Case 2.1.} Suppose that $r$ is not reversible.
In this case, each equivalence class of distinguishing $L$-colorings of $G$ consists of 
\begin{enumerate}
\item an equivalence class of distinguishing $L$-colorings of $Q_a$ for each $a\in A$, and
\item an equivalence class of distinguishing $L$-colorings of $G_{x_i}$ for each $i\in [h]$.
\end{enumerate}

By Lemma~\ref{lemma:complete} there are at least $\binom{k}{m_a}$ equivalence classes of distinguishing $L$-colorings of $Q_a$ for each $a\in A$. 
By induction, $D(G_{x_i}; L) \geq D(G_{x_i}; k)$. 
Therefore
\begin{align*} 
D(G;L) &\geq \prod_{a \in A} \binom{k}{m_a} \prod_{i=1}^h D(G_{x_i};L) \\
&\geq \prod_{a \in A} \binom{k}{m_a} \prod_{i=1}^h D(G_{x_i};k)
\end{align*}
By Theorem~\ref{lemma:qnode}, we conclude that $D(G;L)\ge D(G;k)$.

{\bf Case 2.2.} Suppose that $r$ is reversible.
We consider the graph in two parts.
Let the {\it left side} of $G$ contain the clones of $a$ for all $a \in A'$ and the subgraphs $G_{x_i}$, for $i\in[\CL{h/2}]$. 
Let the {\it right side} contain the clones of $a$ for all $a \in A - A'$ and the subgraphs $G_{x_i}$, for $i \in\{\CL{h/2}+1,\ldots,h\}$.  
Since $r$ is reversible, we color the left side first and then color the right side so that any automorphism that reverses the children of $r$ is not color-preserving.

First we color the left side of $G$. 
For $a\in A'$, there are $D(Q_a;L)$ equivalence classes of distinguishing $L$-colorings of $Q_a$.
For $i\in[\CL{h/2}]$, there are $D(G_{x_i};L)$ equivalence classes of distinguishing $L$-colorings of $G_{x_i}$. 
Thus there are $\prod_{a \in A'} D(Q_a;L) \prod_{i=1}^{\CL{h/2}} D(G_{x_i};L)$ equivalence classes of distinguishing $L$-colorings of the left side of $G$.

Next we color the right side of $G$.
For $a\in A-A'$, there are $D(Q_a; L)$ equivalence classes of distinguishing $L$-colorings of $Q_a$.
For $i\in\{\CL{h/2}+1,\ldots,h\}$, there are $D(G_{x_i}; L)$ equivalence classes of distinguishing $L$-colorings of $G_{x_i}$.
Observe that there is at most one equivalence classes of colorings of the right side of $G$ for which a) the clones of $a\in A-A'$ receive the same colors as the clones of the older twin of $a$ and b) the coloring of $G_{x_i}$ and $G_{x_{h+1-i}}$ are equivalent for each $i\in [\FL{h/2}]$ (this is essentially a coloring that mirrors the coloring of the left side).
Therefore there are at least $\left(\prod_{a \in A-A'} D(Q_a; L) \prod_{i=\CL{h/2}+1}^h D(G_{x_i};L)\right) - 1$ equivalence classes of colorings of the right side of $G$ that complete an equivalence class of distinguishing $L$-colorings of $G$.

By Lemma~\ref{lemma:complete}, $D(Q_a; L) \geq \binom{k}{m_a}$ for all $a\in A$, and by induction $D(G_{x_i};L)\ge D(G_{x_i};k)$ for all $i\in [h]$.
Finally, note that each equivalence class of distinguishing $L$-colorings may have been counted twice, once read left to right and once read right to left.
Therefore 
\begin{align*}
D(G;L) &\geq \frac{1}{2} \prod_{a \in A'} D(Q_a; L) \prod_{i=1}^{\CL{h/2}} D(G_{x_i};L)\left( \prod_{a \in A-A'} D(Q_a; L) \prod_{i=\CL{h/2}+1}^h D(G_{x_i};L) - 1\right)\\
& \geq \frac{1}{2} \prod_{a \in A'} \binom{k}{m_a} \prod_{i=1}^{\CL{h/2}} D(G_{x_i};k)\left( \prod_{a \in A-A'} \binom{k}{m_a} \prod_{i=\CL{h/2}+1}^h D(G_{x_i};k) - 1\right)\\
&=  \frac{1}{2} \left(\prod_{a \in A} \binom{k}{m_a} \prod_{i=1}^{h} D(G_{x_i};k) - \prod_{a \in A'} \binom{k}{m_a} \prod_{i=1}^{\CL{h/2}} D(G_{x_i};k)\right).
\end{align*}
Applying Theorem~\ref{lemma:qnode}, we conclude that $D(G;L)\ge D(G;k)$.
\end{proof}

\end{document}